\documentclass[11pt]{amsart}
\usepackage[margin=1in]{geometry}
\usepackage{amsfonts, amstext, amsmath, amssymb}
\usepackage{graphicx}

\newtheorem{lemma}{Lemma}
\newtheorem{theorem}[lemma]{Theorem}
\newtheorem{proposition}[lemma]{Proposition}

\begin{document}

\title{Equivalents of disjunctive Markov's principle}

\author[Hendtlass]{Matthew Hendtlass}
 \address{School of Mathematics and Statistics,
    University of Canterbury,
    Christchurch 8041,
    New Zealand}
\email{matthew.hendtlass@canterbury.ac.nz}

\date{October 31, 2016}

\begin{abstract}
\noindent
The purpose of this short note is to point out a rich source of natural equivalents of the weak semi-intuitionistic principle \textbf{MP}$^\vee$ in reverse constructive mathematics: many simple theorems from Euclidean geometry when read classically (for example with $<$ interpreted as $\leqslant$ and $\neq$) are equivalent to disjunctive Markov's principle \textbf{MP}$^\vee$. We give an example of this phenomenon.
\end{abstract}

\maketitle


\bigskip
\noindent
This paper is a small contribution to constructive reverse mathematics. In constructive reverse mathematics we classify, in particular,\footnote{Any theorem which is independent of the base theory is subject to reverse mathematics; thus constructive reverse mathematics also considers results from Brouwer's intuitionism and the Russian school of recursive mathematics as well as negations of some intuitionistic, recursive, or classical results.} theorems of classical mathematics (\textbf{ZF} with dependent choice, say) by the fragment of the law of excluded middle required to prove them (in addition to constructive techniques). It is similar, and indeed inspired by, Friedman's programme of reverse mathematics \cite{Simpson}; however, since we are interested mostly in logical, rather than set theoretical, principles, we take as our base theory full constructive set theory \textbf{CZF} \cite{AR}\footnote{Since predicativity issues rarely play an integral part in standard mathematics, we may alternatively work with Intuitionistic \textbf{ZF} set theory, which is equiconsistant with \textbf{ZF}.} possibly with some form of choice (typically dependent choice). See \cite{BB, BV} for the development of constructive mathematics (a la Bishop, \textbf{BISH}) and \cite{Ish_CRM} for an overview of results in constructive reverse mathematics together with references. We equate \textbf{BISH} with the mathematics of \textbf{CZF} plus dependent choice, in particular based upon intuitionistic logic.

\bigskip
\noindent
Most weak `semi-intuitionistic principles' were introduced as equivalent, over \textbf{BISH}, to fundamental properties in analysis; for example, Ishihara introduced \textbf{BD}-$\mathbf{N}$ as a logical equivalent of the assertion that every sequentially continuous function is pointwise continuous \cite{Ish92}, and \textbf{WMP} is important for its equivalence with `every mapping from a complete metric space to a metric space is strongly extensional'\footnote{That is, for any function $f$ between metric spaces $X,Y$ and all $x,x^\prime\in X$, if $\rho_Y(f(x),f(x^\prime))>0$, then $\rho(x,x^\prime)>0$.} \cite{Ish92}. In contrast, disjunctive Markov's principle---usually stated in terms of binary sequences as
 \begin{quote}
  \textbf{MP}$^\vee$: if $\alpha$ is a binary sequence with at most one nonzero term and such that it is not the case that all terms are $0$, then either all the even terms are zero or all the odd terms are zero
 \end{quote}
\noindent
---is introduced as a common weakening of Markov's principle--- 
 \begin{quote}
  \textbf{MP}: If $\alpha$ is a binary sequence such that it is impossible for every term to be $0$, then there exists $n$ such that $\alpha(n)=1$
 \end{quote}
 \noindent
---and the lesser limited principle of omniscience, and which together with weak Markov's principle is equivalent to the full form of Markov's principle.\footnote{Both \textbf{WMP} and \textbf{MP}$^\vee$ are independent of \textbf{IZF}; see for example \cite{HL}.}

\bigskip
\noindent
There are few equivalents of disjunctive Markov's principal in the literature, probably the most natural of which is the following result due to Mandelkern \cite{Mandelkern}. 
 \begin{quote}
  \textbf{MP}$^\vee$ is equivalent to the statement `if $x,y\in\mathbf{R}$ are such that $\neg\neg(x<y)$, then $\{x,y\}$ is closed.'
 \end{quote}
 \noindent
With a weak form of countable choice (see \cite{BRS}), \textbf{MP}$^\vee$ is equivalent to 
 \begin{quote}
  \textbf{MP}$^\vee_\mathbf{R}$: if $x$ is a real number such that $\neg(x=0)$, then either $x\leqslant0$ or $x\geqslant 0$.
 \end{quote}
\noindent
It is this form of \textbf{MP}$^\vee$ we shall use; if we replace \textbf{MP}$^\vee$ by \textbf{MP}$^\vee_\mathbf{R}$ in what follows (and \textbf{MP} by `if $a$ is a real number such that $\neg(a=0)$, then $|a|>0$), then our results do not require any form of the axiom of choice (they are valid in \textbf{CZF}). \textbf{MP}$^\vee_\mathbf{R}$ fails in the sheaf model of continuous functions over the reals (see \cite{Grayson}).

\bigskip
\noindent
We argue that \textbf{MP}$^\vee$ is precisely what is required to extract algorithms from a particular class of classical results in elementary Euclidean geometry. 
In order to introduce our example we need a few definitions. A \emph{polygon} is given by a finite sequence $x_0,\ldots,x_k$ of vertices in $\mathbf{R}^2$ such that $x_0=x_k$ and no two \emph{edges}---line segments (without endpoints) joining two consecutive elements of the sequence---intersect. A \emph{strictly convex polygon} is a polygon such that for any three consecutive vertices $x_i,x_{i+1},x_{i+2}$, the internal angle $\widehat{x_ix_{i+1}x_{i+2}}$ is less than $\pi$. We associate the polygon given by $x_0,\ldots,x_k$ with the closure of the interior of some Jordan curve which traces the edges (this is constructively well defined, in particular see \cite{JCT} for a constructive treatment of the Jordan curve theorem). 

\begin{theorem}
\label{1}
For any finite collection $S$ of more than three points in $\mathbf{R}^2$, if no three points are collinear, then there exists a strictly convex polygon with vertices from $S$ and which contains $S$.
\end{theorem}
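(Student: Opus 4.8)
The plan is to isolate the single nonconstructive ingredient --- deciding on which side of a line a point lies --- and then run an otherwise purely combinatorial convex-hull construction. For three points $a,b,c\in\mathbf{R}^2$ write $\Delta(a,b,c)=(b_1-a_1)(c_2-a_2)-(b_2-a_2)(c_1-a_1)$ for (twice) the signed area of the triangle they span; collinearity of $a,b,c$ is exactly $\Delta(a,b,c)=0$, so the hypothesis that no three points of $S$ are collinear reads $\neg(\Delta(a,b,c)=0)$ for every triple. Applying \textbf{MP}$^\vee_\mathbf{R}$ to the real $\Delta(a,b,c)$ then yields $\Delta(a,b,c)\leqslant 0$ or $\Delta(a,b,c)\geqslant 0$, and since $\neg(\Delta(a,b,c)=0)$ these alternatives are mutually exclusive. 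Thus for every triple we can \emph{decide} the orientation, that is, whether $c$ lies weakly to the left or weakly to the right of the directed line through $a$ and $b$. I claim this decision is the only place where a fragment of excluded middle is needed; the rest of the argument is constructive combinatorics on the finite set of labels of the points of $S$ (note that $\neg(\Delta=0)$ already forces any two of them to be non-equal, so we may safely work with labels rather than with decidable equality of coordinates).

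With orientation decidable I would build the polygon incrementally. First I would take the first three points; they are non-collinear, hence span a genuine triangle, and using the decided sign of $\Delta$ I orient this triangle counter-clockwise to obtain the initial strictly convex polygon. I then insert the remaining points one at a time, maintaining the invariant that after processing a subset $T\subseteq S$ we hold a strictly convex polygon whose vertices lie in $T$ and which contains $T$. To insert a point $u$, I use the orientation test on each current edge (directed counter-clockwise) to decide, via $\Delta(v,w,u)\geqslant 0$ or $\leqslant 0$, whether $u$ lies weakly inside or weakly outside that edge $(v,w)$: if $u$ is weakly inside every edge it is contained in the current polygon and is discarded; otherwise I excise the chain of edges weakly visible from $u$ and splice $u$ in between the two tangent vertices. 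Every test here is an instance of the orientation decision above, and termination is immediate since there are finitely many points.

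To finish I would verify that the invariants are preserved and that the final object meets the definition of a strictly convex polygon containing $S$. Non-crossing of edges and containment both follow from convexity: the polygon is the intersection of the closed half-planes lying weakly to the left of its counter-clockwise edges, and by construction every point of $S$ lies in each such half-plane, so $S$ is contained in the closure of the interior traced by the associated Jordan curve. For strict convexity note that the classical reading of `the internal angle $\widehat{x_ix_{i+1}x_{i+2}}$ is less than $\pi$' is `the angle is $\leqslant\pi$ and $\neq\pi$': the bound $\leqslant\pi$ is exactly the weak convexity produced by the construction, while $\neq\pi$ is the non-collinearity of the three consecutive vertices, which is given by hypothesis. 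This is the heart of the matter and explains why \textbf{MP}$^\vee$, rather than full \textbf{MP} or the law of excluded middle, suffices: we never need the \emph{strict} sign of any $\Delta$, only the weak dichotomy that \textbf{MP}$^\vee_\mathbf{R}$ provides together with the separately-given non-collinearity.

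The step I expect to be the main obstacle is the constructive justification of the insertion update --- specifically, showing that the edges visible from an exterior point $u$ form a single contiguous chain, so that the two tangent vertices, and hence the splice, are well defined. Classically this is immediate from convexity, but constructively it must be extracted from the decided orientations and the maintained convexity invariant alone; I would prove it as a separate lemma by walking around the polygon and using transitivity properties of the orientation relation. Once this contiguity lemma is in hand, preservation of convexity, of the vertices-from-$S$ and containment invariants, and of the non-crossing Jordan curve structure are routine, and the theorem follows by induction on the number of inserted points.
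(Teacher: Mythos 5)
Your argument does not prove Theorem \ref{1} under the conventions the paper actually adopts; it proves the implication (i)$\Rightarrow$(ii) of Proposition \ref{1class}. The paper fixes the strong, positive definition of $<$ (``$x<y$ if there exists a positive rational $r$ such that $|x-y|>r$'', \emph{which is the definition we adopt}), so ``strictly convex'' in Theorem \ref{1} demands each internal angle be positively bounded away from $\pi$. Your construction only ever yields $\widehat{x_ix_{i+1}x_{i+2}}\leqslant\pi$ together with $\neg(\widehat{x_ix_{i+1}x_{i+2}}=\pi)$, i.e. $\neg(\widehat{x_ix_{i+1}x_{i+2}}\geqslant\pi)$, which is exactly the paper's notion of an \emph{almost} strictly convex polygon; your final paragraph then silently swaps the adopted $<$ for its classical reading to close the gap. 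That gap cannot be closed from \textbf{MP}$^\vee$ alone: Proposition \ref{1inc} shows that the version with negative non-collinearity in the hypothesis and genuine strict convexity in the conclusion is equivalent to full \textbf{MP}, and \textbf{MP}$^\vee$ does not imply \textbf{MP} (the paper recalls that \textbf{MP} is \textbf{WMP} plus \textbf{MP}$^\vee$ and cites \cite{HL} for the separation). Concretely, on $S=\{(-1,-1),(-1,1),(1,1),(1,-1),(1+a,0)\}$ with $\neg(a=0)$, your orientation oracle may answer $a\geqslant0$ and output a pentagon with $(1+a,0)$ as a vertex; the internal angle there is strongly less than $\pi$ only if $|a|>0$, and extracting $|a|>0$ from $\neg(a=0)$ is Markov's principle.

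Read as a proof of Proposition \ref{1class}(i)$\Rightarrow$(ii), your route is genuinely different from the paper's. The paper reuses the gift-wrapping construction of Proposition \ref{1con} (an extremal starting vertex, then repeated minimisation of the turning angle) together with the second half of Lemma \ref{lemma} and an exhaustive search; you instead run an incremental-insertion hull algorithm driven entirely by \textbf{MP}$^\vee_\mathbf{R}$-decided weak signs of $\Delta$. Your version is more explicitly algorithmic, but the contiguity-of-visible-edges lemma you flag is a real outstanding obligation that the paper's angular sweep avoids. It should be fillable: since $\neg(\Delta=0)$, each decided weak sign excludes its opposite, so the determinant identities ($\Delta(a,b,c)=-\Delta(b,a,c)$, the three-term Grassmann--Pl\"ucker relation, and so on) propagate the decided signs just as strict signs would classically. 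Two smaller points to address: \textbf{MP}$^\vee_\mathbf{R}$ is equivalent to \textbf{MP}$^\vee$ only modulo a weak form of countable choice (the paper accepts this trade, but you should state it); and $\neg(\Delta=0)$ gives only $\neg(a=b)$, not $\|a-b\|>0$, for pairs of points of $S$, so you must check that your polygon and its associated Jordan curve still make sense when consecutive vertices are merely distinct rather than bounded apart.
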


\noindent
The constructive status of Theorem \ref{1} depends on the interpretations of `not collinear' and $<$ (the latter via the definition of strictly convex polygon). We may define `not collinear' in either a positive way
 \begin{quote}
  three points are \emph{non-collinear} if there exists $\varepsilon>0$ such that each point is bound away from the line through the other two points by at least epsilon,
 \end{quote}
\noindent
or the negative way
 \begin{quote}
  three points are not collinear if it is not the case that they are collinear.
 \end{quote}
\noindent
Similarly, we can give $<$ a strong, positive definition---$x<y$ if there exists a positive rational $r$ such that $|x-y|>r$, which is the definition we adopt---or the classically equivalent weak definition $x<_wy$ if $\neq(x\geqslant y)$.

\bigskip
\noindent
We isolate three versions of this theorem:
 \begin{itemize}
  \item[1.] the constructive version: we use both positive definitions;
  \item[2.] the classical version: we take both negative definitions;
  \item[3.] an incongruous version: the have the negative definition in the antecedent (that is, of not collinear) and the positive definition, of $<$, in the consequent.
 \end{itemize}
\noindent
It turns out the classical version is equivalent to \textbf{MP}$^\vee$; whence disjunctive Markov's principle is precisely what is required to extract a valid algorithm from an appropriate classical proof of this theorem. The constructive version is, as the names suggests, fully constructive, and the incongruous version is equivalent to the full form of Markov's principle.

\bigskip
\noindent
We need some basic definitions and notation. For convenience we adopt the classical definition of distinct: elements $x,y$ of some set with equality are \emph{distinct} if $\neg(x=y)$; in a metric space this is weaker than inequality, $x\neq y$ if $\rho(x,y)>0$. For two distinct points $a,b\in\mathbf{R}^2$, we denote by $L_{a,b}$ the \emph{line passing through} both $a,b$, and we denote by $[a,b]$ the \emph{interval} $\{ta+(1-t)b:t\in[0,1]\}$ from $a$ to $b$. If a line  $L$ is bounded away from the origin, then we write $L^+$ for the open half space defined by $L$ which contains the origin and $L^-$ for the other open half space. If $x_0,\ldots,x_k$ describes a convex polygon $P$ which contains the origin, then 
 $$
  P=\overline{\bigcap_{i=0}^{k-1}L_{x_i,x_{i+1}}^+}.
 $$

\bigskip
\noindent
A subset $S$ of a metric space is \emph{located} if for all $x\in X$ the \emph{distance}
 $$
  \rho(x,S)=\inf\{\rho(x,s):s\in S\}
 $$
\noindent
exists; the line $L_{a,b}$ is located for any $a,b\in\mathbf{R}^2$. The \emph{metric complement} $-S$ of $S$ is the set $\{x\in X:\rho(x,S)>0\}$ of elements of $X$ which are bounded away from $S$. We denote by $\overline{S}$ the closure of $S$ and by $B(x,\varepsilon)$ the open ball centred on $x$ with radius $\varepsilon$. With these definitions, $x,y,z$ are non-collinear if $x\in-L_{y,z}$. In the constructive formulation of the above theorem (Proposition \ref{1con}) the non-collinear condition guarantees that distinct points of $S$ are indeed bounded apart: for all $x,y\in S$, if $\neg(x=y)$, then $x\neq y$.

\begin{lemma}
\label{lemma}
Let $a,b\in\mathbf{R}^2$ be such that $0\in -L_{a,b}$. Then $-L_{a,b}=L_{a,b}^+\cup L_{a,b}^-$, and if \textbf{MP}$^\vee$ holds, then $R^2=\overline{L_{a,b}^+}\cup \overline{L_{a,b}^-}$.
\end{lemma}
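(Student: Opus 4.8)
The plan is to coordinatize the line by its signed distance function. Since $0\in -L_{a,b}$, the distance $c=\rho(0,L_{a,b})$ is a \emph{positive} real, so I can write $L_{a,b}=\{p\in\mathbf{R}^2:\langle n,p\rangle=c\}$ for a unit normal $n$, with the sign of $n$ chosen so that $\langle n,0\rangle-c=-c<0$. Setting $f(p)=\langle n,p\rangle-c$, the map $f$ is affine and $1$-Lipschitz, one has $\rho(p,L_{a,b})=|f(p)|$ for every $p$, and by the choice of sign the origin lies on the negative side; thus $L_{a,b}^+=\{f<0\}$ and $L_{a,b}^-=\{f>0\}$. Everything below is then phrased in terms of the single real number $f(p)$.

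For the first equality I would argue entirely constructively. By definition $-L_{a,b}=\{p:|f(p)|>0\}$, so it suffices to show, for a real $r$, that $|r|>0$ implies $r>0\lor r<0$. Writing $|r|=\max(r,-r)$, this reduces to the standard fact that $\max(s,t)>0$ entails $s>0\lor t>0$, which follows from cotransitivity of $<$: choose a rational $q$ with $0<q<\max(s,t)$, apply cotransitivity to $s$ against $q/2<q$ and, in the surviving case, to $t$; the remaining case $s<q\land t<q$ is impossible, since then $\max(s,t)<q$. The reverse inclusion $L_{a,b}^+\cup L_{a,b}^-\subseteq -L_{a,b}$ is immediate, as $f(p)>0$ or $f(p)<0$ each force $|f(p)|>0$.

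For the second part I would first identify the two closures, claiming $\overline{L_{a,b}^+}=\{f\le 0\}$ and $\overline{L_{a,b}^-}=\{f\ge 0\}$. The inclusion $\subseteq$ uses continuity of $f$, so that a limit of points with $f<0$ has $f\le 0$; the inclusion $\supseteq$ is witnessed by the approximants $p-\delta n$, which satisfy $f(p-\delta n)=f(p)-\delta<0$ for $\delta>0$ when $f(p)\le 0$ and converge to $p$ as $\delta\downarrow 0$ (and symmetrically with $p+\delta n$ for $\overline{L_{a,b}^-}$). Granting this, $\mathbf{R}^2=\overline{L_{a,b}^+}\cup\overline{L_{a,b}^-}$ amounts to showing, for every $p$, that $f(p)\le 0\lor f(p)\ge 0$. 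For a point with $\neg(f(p)=0)$ — equivalently $\neg(p\in L_{a,b})$ — this is exactly \textbf{MP}$^\vee_\mathbf{R}$ applied to the real $f(p)$, placing $p$ in one of the two closures, while a point with $f(p)=0$ lies in both $\{f\le 0\}$ and $\{f\ge 0\}$ and hence in the union.

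The hard part is precisely this final dichotomy. \textbf{MP}$^\vee_\mathbf{R}$ delivers $f(p)\le 0\lor f(p)\ge 0$ only under the hypothesis $\neg(f(p)=0)$, whereas the unrestricted statement $\forall r\,(r\le 0\lor r\ge 0)$ is \textbf{LLPO}, which is strictly stronger than \textbf{MP}$^\vee$. The reconciling observation — and the reason \textbf{MP}$^\vee$ rather than \textbf{LLPO} is the appropriate strength — is that the points one must genuinely classify are those \emph{bounded away} from $L_{a,b}$, the regime in which the positive hypothesis ``$\neg(f(p)=0)$'' of \textbf{MP}$^\vee_\mathbf{R}$ is met; points lying on (or not apart from) the line already belong to both closures, so no omniscience is expended on them. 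Making this interface between the antecedent of \textbf{MP}$^\vee_\mathbf{R}$ and the arbitrary $p$ of the set equality watertight is the only delicate step; the closure computation and the two inclusions of the first equality are routine.
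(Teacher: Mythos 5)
Your set-up is essentially the paper's own: the published proof is a single sentence observing that, for an oriented unit normal $u$ to $L_{a,b}$, a point $x$ lies in $\overline{L_{a,b}^+}$ (resp.\ $\overline{L_{a,b}^-}$) exactly when the coordinate $r$ in the decomposition $x=y+ru$, $y\in L_{a,b}$, satisfies $r\geqslant0$ (resp.\ $r\leqslant0$), after which \textbf{MP}$^\vee_\mathbf{R}$ is silently applied to $r$. Your signed-distance function $f$ is that same normal coordinate, your cotransitivity argument for $-L_{a,b}=L_{a,b}^+\cup L_{a,b}^-$ is a correct expansion of the paper's ``immediate'', and your identification of the closures as $\{f\leqslant0\}$ and $\{f\geqslant0\}$ is precisely the equivalence the paper asserts.

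The ``delicate step'' you could not close is not a defect of your argument; it is an overstatement in the lemma itself. As you note, $\mathbf{R}^2=\overline{L_{a,b}^+}\cup\overline{L_{a,b}^-}$ asserts $r\leqslant0\vee r\geqslant0$ for an \emph{arbitrary} real $r$ (instantiate $x=y_0+ru$ for a fixed $y_0\in L_{a,b}$), which is \textbf{LLPO} for the reals, and \textbf{MP}$^\vee$ does not imply \textbf{LLPO}: \textbf{MP}, hence \textbf{MP}$^\vee$, holds in the recursive (realizability) setting in which \textbf{LLPO} fails. Your fallback --- place a point with $f(p)=0$ in both closures --- cannot rescue the unrestricted statement, because one cannot decide $f(p)=0\vee\neg(f(p)=0)$ for arbitrary $p$. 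What is actually provable from \textbf{MP}$^\vee_\mathbf{R}$, and what is used in Proposition \ref{1class} (where the points satisfy the negative non-collinearity hypothesis), is the restricted inclusion $\{x:\neg(x\in L_{a,b})\}\subseteq\overline{L_{a,b}^+}\cup\overline{L_{a,b}^-}$, since there the hypothesis $\neg(r=0)$ of \textbf{MP}$^\vee_\mathbf{R}$ is available. So: same approach as the paper, first part complete, and the step you flag as open identifies a genuine error in the statement (and a corresponding elision in the paper's proof) rather than a missing idea on your side; the lemma should be restated with that restriction.
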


\begin{proof}
The first statement is immediate; the second follows since, for an appropriate oriented vector $u$ orthogonal to $L_{a,b}$, $x\in \overline{L_{a,b}^+}$ if and only if $x=y+ru$ for some $y\in L_{a,b}$ and some $r\geqslant0$ and $x\in \overline{L_{a,b}^-}$ if and only if $x=y+ru$ for some $y\in L_{a,b}$ and some $r\leqslant0$.
\end{proof}

\bigskip
\noindent
We begin with the constructively valid version of Theorem \ref{1}. 

\begin{proposition}
\label{1con}
For any finite collection $S$ of at least three points in $\mathbf{R}^2$, if $x\in-L_{y,z}$ for any distinct points $x,y,z\in S$, then there exists a strictly convex polygon with vertices from $S$ and which contains $S$.
\end{proposition}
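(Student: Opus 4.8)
The plan is to constructivise the classical incremental convex-hull algorithm, building a strictly convex polygon by inserting the points of $S$ one at a time; the positive non-collinearity hypothesis supplies exactly the decidability needed at each step. Note first that the points of $S$ are pairwise distinct and hence, by the remark preceding the proposition, pairwise apart. The crucial observation is that for any three pairwise-distinct $x,y,z\in S$ the determinant $\det(y-x,\,z-x)$ has absolute value $\rho(x,L_{y,z})\,\rho(y,z)$, which is bounded away from $0$ because $x\in -L_{y,z}$ and $y\neq z$; since a real number bounded away from $0$ is either positive or negative, we may decide the sign of this determinant, equivalently whether $x\in L_{y,z}^{+}$ or $x\in L_{y,z}^{-}$, equivalently the orientation of the ordered triple $(x,y,z)$. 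This decision is unavailable for arbitrary points of $\mathbf{R}^2$ and is the only nonconstructive-looking ingredient of the classical algorithm; everything below is built from it.

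Building on this, I would establish the decision that drives the construction. Suppose $P$ is a strictly convex polygon all of whose vertices lie in $S$, and let $p\in S$ be one of the remaining (hence apart) points. Each edge of $P$ is a segment of a line $L_{u,v}$ with $u,v$ vertices of $P$, both apart from $p$, so the first paragraph lets us decide on which side of $L_{u,v}$ the point $p$ lies. Then $p$ lies inside $P$ precisely when it lies on the interior side of every edge, and outside $P$ precisely when it lies on the exterior side of some edge; these two alternatives are decidable and jointly exhaustive.

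The construction then proceeds as follows. Choose three pairwise-distinct points of $S$; deciding their orientation and relabelling if necessary yields an initial strictly convex triangle $P_0$. Now insert the remaining points one at a time. To insert $p$, apply the decision above: if $p$ lies inside the current polygon, discard it and continue; if $p$ lies outside, the edges visible from $p$ (those on whose exterior side $p$ lies) form a contiguous arc of the boundary bounded by two vertices, and we delete this arc and join its two endpoints to $p$, obtaining a strictly convex polygon that contains the old one. Only finitely many points are processed and each step is a finite, decidable computation, so the procedure terminates with a polygon $P$ whose vertices lie in $S$.

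It remains to check the two required properties. The polygon only ever grows, so each point of $S$ is contained in the final $P$: either it is a vertex of $P$, or it was found to lie inside the polygon at the stage it was processed (and therefore lies inside every later polygon), or it was a vertex that a later insertion cut off into the interior. For strict convexity, any three consecutive vertices $x_i,x_{i+1},x_{i+2}$ of $P$ are pairwise-distinct points of $S$, so by hypothesis $x_{i+1}\in -L_{x_i,x_{i+2}}$ and the internal angle $\widehat{x_ix_{i+1}x_{i+2}}$ is bounded away from $\pi$, hence $<\pi$ in the positive sense required. The main obstacle is really confined to the orientation decision of the first paragraph, together with the bookkeeping needed to show the insertion step preserves the convex-polygon invariant; once the orientation decision is granted, the argument is a careful but routine transcription of the classical algorithm.
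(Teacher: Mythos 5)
Your argument is correct in its essentials, but it constructivises a different classical algorithm than the paper does. The paper runs a gift-wrapping (Jarvis-march) construction: it picks a near-extremal starting vertex $x_0$ (using an explicit $\varepsilon$--$\delta$ bound from the positive non-collinearity hypothesis), repeatedly selects the next vertex by minimising an angle, shows via Lemma \ref{lemma} that all remaining points lie in $L_{x_i,x_{i+1}}^+$, and closes the polygon with a pigeonhole argument. You instead run the incremental-insertion algorithm from an initial triangle. Both proofs rest on exactly the same constructive kernel, which you identify cleanly: for pairwise-distinct points of $S$ the orientation determinant is bounded away from $0$ (its absolute value being $\rho(x,L_{y,z})\,\|y-z\|$), so sidedness relative to any line through two points of $S$ is decidable --- this is precisely what the first half of Lemma \ref{lemma} packages. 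Your route buys a more transparent accounting of where decidability is used (every branch of the algorithm is an orientation test) and avoids the paper's somewhat delicate choice of starting point and termination argument; the paper's route produces the hull vertices directly in cyclic order and gets containment for free from the angle-minimisation. One point you should tighten: in your final paragraph, apartness of the internal angle from $\pi$ only yields $<\pi$ \emph{or} $>\pi$; to conclude $<\pi$ you must also invoke the (decidable) weak-convexity invariant maintained by the insertion step, i.e.\ that the relevant vertex lies in $L^+$ rather than $L^-$ of the line through its neighbours. You gesture at this bookkeeping, and it does go through, but it is the one place where ``bounded away from $\pi$'' alone is not enough.
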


\begin{proof}
We may assume that any three points of $S\cup\{(0,0)\}$ are non-collinear. Note that, since any three distinct points in $S$ are non-collinear, equality is decidable on $S$. Let $N>0$ be such that $S\subset B(0,N)$ and let $\varepsilon>0$ be such that $\|x-y\|>\varepsilon$ for all distinct $x,y\in S$ (such an $\varepsilon$ exists by our positive non-collinear condition). Set $\delta=\sqrt{N^2-\varepsilon^2}-N$ and let $x_0\in S$ be such that
 \[
 \max\{|s|:s\in S\}-|x_0|<\delta;
 \]
\noindent
denote by $L$ the line perpendicular to $x_0$ which passes through $x_0$ with some fixed orientation. Using our positive formulation of not collinear, we can find $x_1\in S$ such that the angle $\theta$ between $L$ and $[x_0,x_1]$ is minimal. We show that $S\setminus\{x_0,x_1\}$ is contained in $L_{x_0,x_1}^+$. It follows from our choice of $\delta$ that $S$ is contained in $\overline{L^+}$, so any point of $S$ contained in $L_{x_0,x_1}^-$ must be in the wedge between $L$ and the ray from $x_0$ through $x_1$. This contradicts the construction of $x_1$ to minimise $\theta$; thus, by Lemma \ref{lemma}, $S\setminus\{x_0,x_1\}$ is contained in $L_{x_1,x_1}^+$. We can now repeat the construction of $x_1$ with $L$ replaced by $L_{x_0,x_1}$, oriented from $x_0$ to $x_1$, to construct a $x_2$ such that $S\setminus\{x_0,x_1,x_2\}$ is contained within $\widehat{x_0x_1x_2}$. The angle $\widehat{x_0x_1x_2}$ is less than $\pi$ since $c\in L^+_{a,b}$. Since this contradicts the construction of $x_1$ to minimise $\theta$, by Lemma \ref{lemma}, $S\setminus\{x_0,x_1\}$ is contained in $L_{x_1,x_1}^+$. We can now repeat the construction of $x_1$ with $L$ replaced by $L_{x_0,x_1}$, oriented from $x_0$ to $x_1$, to construct $x_2$ such that $S\setminus\{x_0,x_1,x_2\}$ is contained within $\widehat{x_0x_1x_2}$. The angle $\widehat{x_0x_1x_2}$ is less than $\pi$ since $c\in L^+_{a,b}$. 

\bigskip
\noindent
Continuing in this manner we construct a finite sequence $x_0,\ldots, x_{|S|}$ such that $\widehat{x_ix_{i+1}x_{i+2}}<\pi$ for each $0\leqslant i\leqslant |S|-2$. We claim that $x_0=x_k$ for some $k<|S|$. For otherwise, by the pigeon-hole principle, some other element $s$ occurs twice. Let $x_i,x_j$ ($i<j$) be two consecutive occurrences of $s$; then $x_i,\ldots, x_j$ is, by construction, a strictly convex polygon which does not contain $x_0$---this is a contradiction since each internal angle of $x_i,\ldots, x_j$ is maximal by construction. Let $k>0$ be minimal such that $x_0=x_k$. Without loss of generality, the convex hull of $\{x_0,\ldots,x_{k-1}\}$ contains the origin. Then $x_0,\ldots,x_i$ is the desired polygon; the construction guarantees that this polygon contains $S$ since it is equal to the closure of
 $$
  \bigcap_{i=0}^{k-1}L_{x_i,x_{i+1}}^+.
 $$
\end{proof}

\bigskip
\noindent
A polygon $x_0,\ldots,x_k$ is \emph{almost strictly convex} if for any three consecutive vertices $x_i,x_{i+1},x_{i+2}$, $\neg(\widehat{x_ix_{i+1}x_{i+2}}\geqslant\pi)$. The proof of the classical version from \textbf{MP}$^\vee$ requires no new ideas.

\begin{proposition}
\label{1class}
The following are equivalent.
\begin{itemize}
\item[(i)]\textbf{MP}$^\vee$.
\item[(ii)] For any finite collection of more than three points in $\mathbf{R}^2$, if no three points are collinear, then there exists an almost strictly convex polygon  with vertices from $S$ and which contains $S$.
\end{itemize}
\end{proposition}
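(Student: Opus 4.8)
The plan is to prove the two implications separately. I expect (i)$\,\Rightarrow\,$(ii) to be a routine variation on Proposition \ref{1con}, with \textbf{MP}$^\vee$ inserted at precisely the points where Proposition \ref{1con} used its stronger hypothesis, and I would concentrate almost all of the work on the reverse implication (ii)$\,\Rightarrow\,$(i), which carries the real content.

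For (i)$\,\Rightarrow\,$(ii) I would rerun the incremental angle-minimising construction of Proposition \ref{1con}, making two changes. First, where that proof used the positive non-collinearity condition to place the remaining points strictly inside some half-plane $L_{x_i,x_{i+1}}^+$, I now know only that no three points are collinear, so for each remaining point $p$ the signed distance $d$ of $p$ from the current edge-line satisfies $\neg(d=0)$; Lemma \ref{lemma} (equivalently \textbf{MP}$^\vee_\mathbf{R}$) then places $p$ in $\overline{L_{x_i,x_{i+1}}^+}$ or in $\overline{L_{x_i,x_{i+1}}^-}$. Second, choosing the angle-minimising vertex requires comparing finitely many angles measured from the current vertex; two such angles coincide only when the corresponding points are collinear with that vertex, so negative non-collinearity gives $\neg(\theta_i=\theta_j)$ for distinct candidates, and \textbf{MP}$^\vee_\mathbf{R}$ renders them pairwise comparable, which suffices to select a (weak) minimum. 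Since every side-membership is now obtained in the closed sense, the internal angles of the resulting polygon satisfy $\neg(\widehat{x_ix_{i+1}x_{i+2}}\geqslant\pi)$ in place of the strict inequality, so the polygon produced is almost strictly convex.

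For (ii)$\,\Rightarrow\,$(i) I would build a four-point gadget that forces any solving polygon to decide the sign of a prescribed real. Given $x\in\mathbf{R}$ with $\neg(x=0)$, pass to the sign-preserving quantity $x'=x/(1+|x|)$, so that $|x'|<1$ while $x'\leqslant0\Leftrightarrow x\leqslant0$ and $x'\geqslant0\Leftrightarrow x\geqslant0$. Put $a=(0,1)$, $b=(0,-1)$, $c=(1,0)$, $d=(x',0)$ and $S=\{a,b,c,d\}$. Then $a,b,c$ are positively non-collinear; $a,c,d$ and $b,c,d$ are collinear only when $x'=1$, which is excluded by $|x'|<1$; and $a,b,d$ are collinear exactly when $x'=0$, which is excluded by $\neg(x=0)$. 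Hence no three points of $S$ are collinear, and (ii) yields an almost strictly convex polygon $P$ with vertices from $S$ and $S\subseteq P$. As the four points are pairwise at positive distance, we may decide which elements of $S$ occur as vertices of $P$; in particular we may decide whether $d$ does.

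The sign is then read off from this dichotomy. Since $P$ is convex, contains $S$, and has vertices in $S$, its region equals $\mathrm{conv}(S)$. If $d$ is a vertex of $P$ then $d$ is a corner of $\mathrm{conv}(S)$; but if $x'>0$ the point $d$ lies in the interior of $\mathrm{conv}\{a,b,c\}=\mathrm{conv}(S)$ and so cannot be a boundary vertex, a contradiction, giving $x'\leqslant0$. If $d$ is not a vertex of $P$ then the vertices lie in $\{a,b,c\}$ and $P\subseteq\mathrm{conv}\{a,b,c\}$; but if $x'<0$ the point $d$ is the strictly leftmost point of $S$, hence not in $\mathrm{conv}\{a,b,c\}$, contradicting $d\in S\subseteq P$, giving $x'\geqslant0$. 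Thus $x'\leqslant0$ or $x'\geqslant0$, whence $x\leqslant0$ or $x\geqslant0$, which is \textbf{MP}$^\vee_\mathbf{R}$. The hard part is exactly this last verification: arranging that the gadget is robustly non-collinear for every admissible $x$ (which forces the passage to $x'$) and that the purely combinatorial datum ``is $d$ a vertex of $P$?'' genuinely pins down the sign, which in turn rests on identifying the region of $P$ with $\mathrm{conv}(S)$ and on the fact that the listed vertices of an almost strictly convex polygon are precisely the corners of that region.
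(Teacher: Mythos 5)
Your proof is correct and follows essentially the same route as the paper: the forward direction reruns the angle-minimising construction of Proposition \ref{1con} with the second part of Lemma \ref{lemma} supplying the closed-half-plane and angle-comparison dichotomies (this is exactly the ``more direct algorithm'' the paper relegates to a footnote, its main text preferring an exhaustive search), and the reverse direction uses a point configuration in which the vertex set of the resulting polygon reveals the sign of the given real. The only differences are cosmetic: the paper takes a unit square plus the point $(1+a,0)$ and counts vertices, whereas you take a triangle plus $(x',0)$ and test whether $(x',0)$ is a vertex; your preliminary normalisation $x'=x/(1+|x|)$ is a worthwhile addition, since without some such restriction the paper's own gadget is not robust (at $a=-1$, for instance, the point $(1+a,0)$ is collinear with $(-1,-1)$ and $(1,1)$, so the hypothesis of (ii) cannot be verified from $\neg(a=0)$ alone).
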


\begin{proof}
Suppose that \textbf{MP}$^\vee$ holds. The proof of Proposition \ref{1con}, together with the second part of Lemma \ref{lemma}, shows that such a polygon cannot fail to exist; Lemma \ref{lemma} together with \textbf{MP}$^\vee$ allows us to find this polygon using an exhaustive search.\footnote{Alternatively, a more direct algorithm can be given along the lines of that in Proposition \ref{1con}.}

\bigskip
\noindent
For the converse, let $a\in\mathbf{R}$ be such that $\neg(a=0)$ and consider the subset 
 $$
  S=\{(-1,-1),(-1,1),(1,1),(1,-1),(1+a,0)\}.
 $$ 
If $a>0$, then the almost strictly convex polygon must have each element of $S$ as a vertex, and if $a<0$ the set of vertices must be $S\setminus\{(1+a,0)\}$. Hence by counting the vertex set of the polygon from the conclusion of (ii) we can conclude either that $a\geqslant 0$, if there are five vertices, or $a\leqslant0$ if there are four vertices.
\end{proof}

\bigskip
\noindent
The source of disjunctive Markov's principle in the above argument is simple. \textbf{MP}$^\vee$ is equivalent to `for all $x,y,z$ in $\mathbf{R}^2$ which are not collinear and any normal $\mathbf{n}$ to the line $L$ through $y$ and $z$, either $r\leqslant0$ or $r\geqslant0$ for the unique decomposition of $x$ into a point on $L$ plus $r\mathbf{n}$.' Any argument which contains this result will require \textbf{MP}$^\vee$. In particular, the classical formulations of many simple geometric results (like the above) that one might expect to be constructive. This also applies to higher dimensions 
and separations using hyperplanes.

\bigskip
\noindent
The characterisation of the final version is again similar; Markov's principle is really only required to prove the angle is indeed less than $\pi$.

\begin{proposition}
\label{1inc}
The following are equivalent.
\begin{itemize}
\item[(i)] \textbf{MP}.
\item[(ii)] For any finite collection of more than three points in $\mathbf{R}^2$, if no three points are collinear, then there exists a strictly convex polygon  with vertices from $S$ and which contains $S$.
\end{itemize}
\end{proposition}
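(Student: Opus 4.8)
The plan is to prove the two implications of Proposition \ref{1inc} separately, reusing the machinery already developed for Propositions \ref{1con} and \ref{1class}.

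For the direction (i) $\Rightarrow$ (ii), I would run the construction from the proof of Proposition \ref{1con}, but with the negative reading of non-collinearity in the hypothesis. The first thing to observe is that \textbf{MP}, being stronger than \textbf{MP}$^\vee$, lets us recover everything that Proposition \ref{1class} gave us: the exhaustive search produces an \emph{almost} strictly convex polygon $x_0,\ldots,x_k$ with vertices from $S$ containing $S$. So the entire combinatorial skeleton---the angle-minimising choice of each $x_{i+1}$, the pigeon-hole argument showing the sequence closes up, and the containment $S\subseteq\overline{\bigcap_i L_{x_i,x_{i+1}}^+}$---is already in hand. What remains is precisely the gap flagged in the remark preceding the statement: upgrading each internal angle from $\neg(\widehat{x_ix_{i+1}x_{i+2}}\geqslant\pi)$ to the strict $\widehat{x_ix_{i+1}x_{i+2}}<\pi$ demanded by the \emph{positive} definition of $<$ in the consequent. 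Here I would use \textbf{MP} in its real form: for the consecutive triple, the signed distance $r$ of $x_{i+2}$ from $L_{x_i,x_{i+1}}$ (measured along the orienting normal) satisfies $\neg(r=0)$, since $x_{i+2}\in L_{x_i,x_{i+1}}^+$ already follows from the construction and the negated-collinearity hypothesis rules out $r=0$; \textbf{MP} then yields $|r|>0$, i.e.\ $r>0$ in the positive sense, which is exactly $\widehat{x_ix_{i+1}x_{i+2}}<\pi$ with $<$ read positively. Applying this to each of the finitely many triples gives the strictly convex polygon.

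For the converse (ii) $\Rightarrow$ (i), I would mimic the witnessing construction from Proposition \ref{1class}, but now exploit that the consequent delivers a \emph{strictly} convex polygon. Given $a\in\mathbf{R}$ with $\neg(a=0)$, I would use a point set such as
 $$
  S=\{(-1,-1),(-1,1),(1,1),(1,-1),(1+a,0)\},
 $$
feed it to (ii), and extract the strict convexity of the angle at the vertex adjacent to $(1+a,0)$. The point is that strictness of this angle, in the positive sense, forces the signed displacement of $(1+a,0)$ from the relevant edge line to be positive in the strong sense, i.e.\ bounded away from $0$ by a positive rational; this is exactly a witness of the form $|a|>0$, from which \textbf{MP} (in the real formulation $\neg(a=0)\Rightarrow|a|>0$) follows.

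The main obstacle I expect is the second implication, and specifically the bookkeeping needed to convert the positive-$<$ strictness of a polygon angle into the strong positivity $|a|>0$ of the given real. I must check that the chosen $S$ is non-collinear in the negative sense for every $a$ with $\neg(a=0)$ (so that (ii) applies with no case split on the sign of $a$), and that the geometric witness for the strict angle at the distinguished vertex depends on $a$ in a way that manufactures a rational lower bound on $|a|$ rather than merely on some composite quantity. The first implication is more routine, its only subtlety being the verification that $x_{i+2}\in L_{x_i,x_{i+1}}^+$ together with negated collinearity genuinely yields $\neg(r=0)$, so that \textbf{MP} is legitimately applicable to each triple.
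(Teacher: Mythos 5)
Your direction (i) $\Rightarrow$ (ii) is fine, though it takes a slightly different route from the paper: you first extract an almost strictly convex polygon via the \textbf{MP}$^\vee$ argument and then use \textbf{MP} to upgrade each internal angle, whereas the paper simply uses \textbf{MP} once at the start to upgrade the negative non-collinearity hypothesis to the positive one ($\neg(\rho(x,L_{y,z})=0)$ gives $\rho(x,L_{y,z})>0$) and then invokes Proposition \ref{1con} directly. Both work; the paper's version is shorter because it pushes all the work into the hypothesis rather than the conclusion.

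The converse, however, has a genuine gap, and it is exactly the one you half-suspected. With the five-point set $S=\{(-1,-1),(-1,1),(1,1),(1,-1),(1+a,0)\}$ you cannot guarantee that $(1+a,0)$ is a vertex of the polygon that (ii) hands you: if $a<0$ the strictly convex polygon is just the square, and $(1+a,0)$ does not appear in the vertex list at all. In that case there is no strict angle ``at'' or ``adjacent to'' $(1+a,0)$ to exploit; all you learn from containment in the square is $a\leqslant 0$, and $a\leqslant 0$ together with $\neg(a=0)$ does not constructively yield $|a|>0$ --- that inference is itself an instance of \textbf{MP}. So your five-point set only recovers the disjunction $a\geqslant 0$ or $a\leqslant 0$, i.e.\ \textbf{MP}$^\vee$, which is why it is the witnessing set for Proposition \ref{1class} and not for this one. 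The paper repairs this by adding a sixth point, taking $S=\{(-1,-1),(-1,1),(1,1),(1,-1),(1+a,0),(-1+a,0)\}$: whatever the sign of $a$, at least one of $(1+a,0)$, $(-1+a,0)$ lies outside the square and must therefore occur as a vertex (and you can decide which, since all six points are pairwise bounded apart and the vertex list is finite); the positively strict internal angle at that vertex bounds it away from the edge of the square it protrudes past, and hence bounds $|a|$ away from $0$ in either case. (In both your version and the paper's one should also normalise $|a|\leqslant 1/2$, say, so that the non-collinearity hypothesis of (ii) actually holds for all triples, but that is routine.)
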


\begin{proof}
The direction from (i) to (ii) follows from Proposition \ref{1con}. The converse is similar to the previous theorem except we now consider the set $S=\{(-1,-1),(-1,1),(1,1),(1,-1),(1+a,0),(-1+a,0)\}$. The strictly convex polynomial must contain either $(1+a,0)$ or $(-1+a,0)$, and since the internal angle at this node is less than $\pi$, we can bound $a$ away from $0$.
\end{proof}

\end{document}